\colorlet{mdtRed}{red!50!black}
\definecolor{dblue}{rgb}{0,0,.6}
\renewcommand*{\backref}[1]{}
\renewcommand*{\backrefalt}[4]{[{%
		\ifcase #1 Not cited.%
		\or $\uparrow$~#2.%
		\else $\uparrow$~#2.%
		\fi%
	}]}
\DeclareMathOperator{\Pic}{\textnormal{Pic}}
\DeclareMathOperator{\Spec}{{\rm Spec}}
\DeclareMathOperator{\mnl}{\mc{M}_C(\textit{n,\,L})}
\DeclareMathOperator{\mnlss}{\mc{M}^{ss}_C(\textit{n,\,L})}
\DeclareMathOperator{\mnls}{\mc{M}^{s}_C(\textit{n,\,L})}
\newcommand{\mf}[1]{\mathfrak{#1}}
\newcommand{\mc}[1]{\mathcal{#1}}
\newcommand{\bb}[1]{\mathbb{#1}}
\newtheorem{theorem}{Theorem}[section]
\newtheorem{lemma}[theorem]{Lemma}
\newtheorem{proposition}[theorem]{Proposition}
\theoremstyle{definition}
\newtheorem{definition}[theorem]{Definition}
\newtheorem{remark}[theorem]{Remark}
\numberwithin{equation}{section}
\renewcommand{\email}[2][1]{\thanks{\textit{Email address}#1: \href{mailto:#2}{#2}}}
\renewcommand{\address}[2][1]{\thanks{\textit{Address}#1: #2}} 
\newcommand\fnnum[1]{\textsuperscript{#1}}
\begin{document}
	
	\baselineskip=15.5pt
	
	\title[$\bb{A}^1$--connectedness of moduli stack of semi-stable vector bundles]%
	{$\bb{A}^1$--connectedness of moduli stack of semi-stable and parabolic
		semi-stable vector bundles over a curve}
	
	\author[S. Chakraborty]{Sujoy Chakraborty}
	\address[\fnnum{1}]{Department of Mathematics,
		Indian Institute of Science Education and Research Tirupati,
		Andhra Pradesh 517507, India}
	\email[\fnnum{1}]{sujoy.cmi@gmail.com}
	
	\author[S. Holme Choudhury]{Saurav Holme Choudhury}
	\address[\fnnum{2}]{Department of Mathematics,
		Indian Institute of Science Education and Research Tirupati,
		Andhra Pradesh 517507, India}
	\email[\fnnum{2}]{sourav.ac.93@gmail.com}
	
	\subjclass[2010]{14D20, 14D23, 14F42, 14H60}
	
	\keywords{Moduli stack of vector bundles; $\bb{A}^1$--connectedness;
		parabolic vector bundle.}

	\begin{abstract}
		Let $C$ be an irreducible smooth projective curve of genus $g\geq 2$ over
		an algebraically closed field. We prove that the moduli stack of semi-stable
		vector bundles on $C$ of fixed rank and determinant is $\bb{A}^1$--connected.
		We also show that the moduli stack of quasi-parabolic vector bundles with a
		fixed determinant and a given quasi-parabolic data along a set of points in
		$C$ is $\bb{A}^1$-connected. Moreover, for small and generic weights
		$\boldsymbol{\alpha}$ with $\gcd(n, \deg L) = 1$, the open substack of
		$\boldsymbol{\alpha}$-semistable parabolic vector bundles is also
		$\bb{A}^1$-connected.
	\end{abstract}
	
	\maketitle
	
	\section{Introduction}
	
	Let $C$ be an irreducible smooth projective curve of genus $g\geq 2$ over
	an algebraically closed field $k$. Let $\mc{M}_C(n,L)$ denote the moduli
	stack of vector bundles on $C$ of rank $n$ and determinant $L\in \Pic(C)$.
	By definition, an object in $\mnl$ over a $k$-scheme $S$ is given by a rank
	$n$ vector bundle $\mc{E}$ on $C\times S$, together with an isomorphism
	$\det(\mc{E})\overset{\simeq}{\longrightarrow}p^*L$, where
	$p: C\times S \longrightarrow C$ is the first projection. We have the
	following inclusions of open substacks:
	\[
	\mc{M}^s_C(n,L)\subset \mnlss\subset\mnl,
	\]
	where $\mnlss$ (respectively, $\mc{M}^s_C(n,L)$) denotes the stack of
	semistable (respectively, stable) vector bundles of rank $n$ and determinant
	$L$, whose objects over a $k$-scheme $S$ are given by a vector bundle
	$\mc{E}$ on $C\times S$ whose restriction to each geometric point
	$\textnormal{Spec}\,K\rightarrow S$ is a \textit{semistable} (respectively,
	stable) vector bundle over the curve $C_K$ (see Definition~\ref{def:family}
	for details). It is known that $\mnl$ is a smooth algebraic stack over $k$.
	
	Via the nerve construction, any algebraic stack defines an object in the
	$\bb{A}^1$--homotopy category \cite{MV99} by regarding it as a simplicial
	sheaf. This naturally raises the question of $\bb{A}^1$--connectedness of
	the stacks $\mnl$, as well as for the open substacks $\mnlss$ and
	$\mc{M}_C^s(n,L)$. In \cite{HY24}, the $\bb{A}^1$--connectedness of $\mnl$
	was proved. It was shown that two vector bundles of the same rank are
	$\bb{A}^1$--concordant (see Definition~\ref{def:concordance}) if and only if
	their determinants are isomorphic. This was achieved through looking at
	extensions of vector bundles rather than individual vector bundles, and
	showing that, in a certain sense, any extension of vector bundles can be
	``joined'' to the trivial extension via a line contained in the moduli space
	of extensions between two fixed vector bundles over a projective scheme. The
	key observation here was the fact that the moduli space of such extensions of
	vector bundles is an affine space, which allowed for such a result.
	Unfortunately, this approach does not carry through for the open substack
	$\mnlss$ or $\mc{M}_C^s(n,L)$, as the presence of semistability causes
	additional complications. It is also worth mentioning, as pointed out in
	\cite[Example~2.9]{HY24}, that when $n$ and $\deg(L)$ are coprime, the
	$\bb{A}^1$-connectedness of the stable moduli stack $\mc{M}^{s}_C(n,L)$
	cannot be concluded directly from the rationality of its coarse moduli space
	alone.
	
	There is also the notion of (quasi-)parabolic vector bundles over a curve
	introduced by Mehta and Seshadri \cite{MS80}; these are vector bundles
	together with the data of weighted flags on the fibers over a collection of
	finitely many points on the curve. From the perspective of algebraic
	geometry, the study of the moduli stacks of (quasi-)parabolic vector bundles
	is an important and active area of research.
	
	In this article, we aim to address the question of $\bb{A}^1$--connectedness
	of the following moduli stacks:
	\begin{enumerate}[$\bullet$]
		\item The moduli stack $\mnlss$ of semistable vector bundles over $C$ of
		fixed rank $n$ and determinant $L$.
		\item The moduli stack $\mc{PM}_C^{\boldsymbol{e,m}}(n,L)$ of
		quasi-parabolic vector bundles on $C$ with a fixed determinant and a
		quasi-parabolic data $(\boldsymbol{e,m})$ along a set of parabolic points
		on $C$ (see Section~\ref{section:quasi-parabolic} for details).
		\item The stack $\mc{PM}_C^{\boldsymbol{\alpha}-ss,\boldsymbol{e,m}}(n,L)$
		of $\boldsymbol{\alpha}$-semistable parabolic vector bundles for a system
		of weights $\boldsymbol{\alpha}$; these are open substacks of
		$\mc{PM}_C^{\boldsymbol{e,m}}(n,L)$.
	\end{enumerate}
	The $\bb{A}^1$--connectedness of $\mnlss$ is shown in
	Section~\ref{section:a1-connectedness}. Our proof is based on a result of
	Langton regarding the universal closedness of $\mnlss$ \cite{L75}. To apply
	Langton for the semistable locus, one needs to have in place a direct
	$\bb{A}^1$-concordance between two $K$-points of $\mnlss$ whose semistable
	locus is nonempty. This precludes the usage of the chain in \cite{HY24}, as
	one can show that for $n>2$, the intermediate $\bb{A}^1$s in their
	constructed concordance lie completely in the unstable locus. We avoid this
	by building a direct $\bb{A}^1$-concordance between any two vector bundles
	of the same rank and determinant. If we begin with two semistable vector
	bundles, then Langton applies non-trivially to give a direct
	$\bb{A}^1$-concordance which lies completely in the semistable locus. This
	also provides an alternate proof for a part of the main theorem of
	\cite{HY24}.
	
	In Section~\ref{section:quasi-parabolic}, we prove the
	$\bb{A}^1$--connectedness of $\mc{PM}_C^{\boldsymbol{e,m}}(n,L)$ using the
	concept of flag bundle stacks. In Section~\ref{section:parabolic}, 
	$\bb{A}^1$-connectedness of the moduli stack
	$\mc{PM}_C^{\boldsymbol{\alpha}-ss,\boldsymbol{e,m}}(n,L)$ is shown for small and
	generic weights $\boldsymbol{\alpha}$ with $\gcd(n, \deg L) = 1$. In this
	case, $\boldsymbol{\alpha}$-semistability coincides with
	$\boldsymbol{\alpha}$-stability, the underlying vector bundles are stable,
	and the forgetful morphism to $\mnls$ is an iterated flag bundle stack. The
	$\bb{A}^1$-connectedness then follows from Theorem~\ref{thm:a1-connectedness}
	and Proposition~\ref{prop:flag-bundle-a1-connected}.
	
	\section{$\bb{A}^1$--connectedness of the moduli stack of semistable
		vector bundles over a curve}
	\label{section:a1-connectedness}
	
	\subsection{$\bb{A}^1$-connected components of simplicial sheaves}
	
	\begin{definition}[Sheaf of $\mathbb{A}^1$-connected components]
		\label{def:a1-connected-components}
		Let $S$ be a Noetherian base scheme of finite Krull dimension, and let
		$\mathrm{Sm}_S$ denote the category of smooth schemes of finite type over
		$S$, equipped with the Nisnevich topology. Let $\mathcal{H}_{\bullet}(S)$
		denote the $\mathbb{A}^1$-homotopy category of spaces over $S$, which is the
		homotopy category obtained from the category of simplicial presheaves
		$\mathrm{sPre}(\mathrm{Sm}_S)$ by localizing with respect to
		$\mathbb{A}^1$-weak equivalences, as defined in \cite{MV99}. For a space
		$\mathcal{X} \in \mathcal{H}_{\bullet}(S)$, the \textbf{sheaf of
			$\mathbb{A}^1$-connected components}, denoted $\pi_0^{\mathbb{A}^1}(\mathcal{X})$,
		is the Nisnevich sheaf on $\mathrm{Sm}_S$ defined as the sheafification of
		the presheaf
		\[
		U \longmapsto \mathrm{Hom}_{\mathcal{H}_{\bullet}(S)}(U, \mathcal{X}),
		\]
		where $U \in \mathrm{Sm}_S$. Equivalently, if
		$\mathrm{L}_{\mathbb{A}^1}: \mathrm{sPre}(\mathrm{Sm}_S) \to
		\mathcal{H}_{\bullet}(S)$ denotes the $\mathbb{A}^1$-localization functor,
		then
		\[
		\pi_0^{\mathbb{A}^1}(\mathcal{X}) = a_{\mathrm{Nis}}
		\!\left(U \mapsto \pi_0\!\left(\mathrm{L}_{\mathbb{A}^1}(\mathcal{X})(U)
		\right)\right),
		\]
		where $a_{\mathrm{Nis}}$ denotes Nisnevich sheafification and $\pi_0$
		denotes the set of path components of a simplicial set. A space $\mathcal{X}$
		is called \textbf{$\mathbb{A}^1$-connected} if
		$\pi_0^{\mathbb{A}^1}(\mathcal{X}) \cong *$.
	\end{definition}
	
	\begin{definition}[\text{\cite[Definition~2.6]{HY24}}]
		\label{def:naive-a1-homotopy}
		Let $\mf{X}$ be a simplicial sheaf on $\mathrm{Sm}_k$, the Grothendieck
		site of smooth finite-type schemes over a field $k$ equipped with the
		Nisnevich topology.
		\begin{enumerate}[(1)]
			\item Let $U\in \mathrm{Sm}_k$. Two objects $x$ and $y$ in $\mf{X}(U)$
			are said to be \textit{naively $\bb{A}^1$-homotopic} if there exists a
			map $f: \bb{A}^1\times_k U \rightarrow \mf{X}$ satisfying $f_0 = x$ and
			$f_1 = y$, where $f_i$ is the composition
			$U\xrightarrow{i}\bb{A}^1\times_k U \rightarrow \mf{X}$.
			\item We shall denote by $S'(\mf{X})$ the presheaf on $\mathrm{Sm}_k$
			whose value on a $k$-scheme $U$ is
			\[
			U\mapsto \mf{X}(U)/{\sim},
			\]
			where $\sim$ is the equivalence relation generated by naive
			$\bb{A}^1$-homotopy in~(1). The Nisnevich sheafification of $S'(\mf{X})$
			is denoted by $S(\mf{X})$.
		\end{enumerate}
	\end{definition}
	
	\begin{remark}\label{remark:a1-connected-condition}
		By a result of Morel \cite[Lemma~6.1.3]{M05}, a space $X$ over an infinite
		field $k$ is $\bb{A}^1$-connected if and only if
		\[
		\pi^{\bb{A}^1}_0(X)(\mathrm{Spec}\,F) = *
		\]
		for every finitely generated separable field extension $F$ of $k$. In
		addition, there exists a natural epimorphism $S(X) \to \pi^{\bb{A}^1}_0(X)$.
		Using this, one can show that if $S(X)(\mathrm{Spec}\,F) = *$ for every
		finitely generated separable field extension $F$ of $k$, then $X$ is
		$\bb{A}^1$--connected; this criterion is used in the proof of
		\cite[Theorem~1.1]{HY24}. Moreover, since $S(X)$ is the Nisnevich
		sheafification of $S'(X)$ (see Definition~\ref{def:naive-a1-homotopy}), it
		follows from the properties of the Nisnevich topology that if
		$S'(X)(\mathrm{Spec}\,F) = *$ for every finitely generated separable field
		extension $F$ of $k$, the same holds for its sheafification $S(X)$.
	\end{remark}
	
	\subsection{Valuative criterion of properness for algebraic stacks}
	\label{subsection:valuative-criterion}
	
	Let $\mf{X}$ be an algebraic stack of finite type over $k$. The valuative
	criterion of properness for $\mf{X}$ states the following (see
	\cite[\S~3.1]{ABBLT22}): for every discrete valuation ring $R$ with fraction
	field $K$ and every $2$-commutative diagram
	\begin{align}
		\begin{gathered}
			\xymatrix{
				\textnormal{Spec}\,K \ar[r] \ar[d] & \mf{X} \ar[d] \\
				\textnormal{Spec}\,R \ar[r] & \Spec k,
			}
		\end{gathered}
	\end{align}
	there exists a field extension $K'/K$, a discrete valuation ring
	$R'\subseteq K'$ dominating $R$, and a morphism
	$\Spec R' \longrightarrow \mf{X}$ making the following diagram
	$2$-commutative:
	\begin{align}\label{diagram:valuative-criterion}
		\begin{gathered}
			\xymatrix{
				\Spec K' \ar[r] \ar[d] & \textnormal{Spec}\,K \ar[r] \ar[d]
				& \mf{X} \ar[d] \\
				\Spec R' \ar[r] \ar@{-->}[urr]^(.4){\exists}
				& \textnormal{Spec}\,R \ar[r] & \Spec k.
			}
		\end{gathered}
	\end{align}
	
	\subsection{Semistability of vector bundles}
	
	The notion of semistability and stability of vector bundles over any
	irreducible smooth projective variety over a field is well-known; see
	\cite[\S~2]{L75} for the details. We recall it here for curves.
	
	\begin{definition}\label{def:slope}
		Let $C$ denote an irreducible smooth projective curve over a field $K$, and
		let $E$ be an algebraic vector bundle over $C$.
		\begin{enumerate}[(i)]
			\item The \textit{degree} of $E$, denoted $\deg(E)$, is the degree of the
			divisor class associated to $\det(E)$.
			\item The \textit{slope} of $E$, denoted $\mu(E)$, is the rational number
			$\deg(E)/\mathrm{rank}(E)$.
			\item $E$ is \textit{semistable} (respectively, \textit{stable}) if for
			every proper sub-bundle $F\subsetneq E$ we have
			$\mu(F) \leq \mu(E)$ (respectively, $\mu(F)<\mu(E)$).
		\end{enumerate}
	\end{definition}
	
	\subsection{Concordances}
	
	The notion of $\bb{A}^1$--concordance between two vector bundles was
	introduced in \cite{AKW17}; we recall it below. Note that this notion
	coincides with naive $\bb{A}^1$-homotopy (Definition~\ref{def:naive-a1-homotopy})
	in the context of moduli stacks of vector bundles over a curve.
	
	\begin{definition}[\text{\cite[Definition~5.2]{AKW17}}]\label{def:concordance}
		Let $X$ be a scheme over a field $K$, and choose two distinct points
		$x_0,x_1 \in \bb{A}^1_K$. Two vector bundles $\mc{E}_0$ and $\mc{E}_1$ on
		$X$ are said to be \textit{directly $\bb{A}^1$-concordant} if there exists a vector
		bundle $\mc{E}$ over $X\times \bb{A}^1_K$ such that
		$\mc{E}_0\xrightarrow{\;\simeq\;}\mc{E}|_{X\times\{x_0\}}$ and
		$\mc{E}_1\xrightarrow{\;\simeq\;}\mc{E}|_{X\times\{x_1\}}$. They are
		\textit{$\bb{A}^1$-concordant} if they are equivalent under the equivalence
		relation generated by direct $\bb{A}^1$-concordance.
	\end{definition}
	
	Thus $\mc{E}_0$ and $\mc{E}_1$ are $\bb{A}^1$-concordant if there exists a
	finite sequence $\mc{E}_0=\mc{F}_1, \mc{F}_2, \ldots, \mc{F}_r=\mc{E}_1$
	such that each consecutive pair $(\mc{F}_i,\mc{F}_{i+1})$ is directly
	$\bb{A}^1$-concordant.
	
	\begin{definition}\label{def:ss-concordance}
		Let $X$ be a scheme over a field $K$. Two semistable vector bundles
		$\mc{E}_0$ and $\mc{E}_1$ on $X$ are \textit{directly semistably
			$\bb{A}^1$-concordant} if there exists a family of semistable vector bundles
		$\mc{E}$ over $X\times \bb{A}^1_K$ (see Definition~\ref{def:family} (2))
		such that $\mc{E}_0\xrightarrow{\;\simeq\;} \mc{E}|_{X\times\{x_0\}}$ and
		$\mc{E}_1\xrightarrow{\;\simeq\;} \mc{E}|_{X\times\{x_1\}}$ for two
		distinct points $x_0, x_1 \in \bb{A}^1_K$. They are \textit{semistably
			$\bb{A}^1$-concordant} if they are equivalent under the equivalence relation
		generated by direct semistably $\bb{A}^1$-concordance.
	\end{definition}
	
	The following standard result provides a large supply of direct
	$\bb{A}^1$-concordances; see also \cite[Proposition~2.4]{HY24}.
	
	\begin{proposition}[\text{\cite[Proposition~3.1]{La83}}]
		\label{prop:concordance}
		Let $V=\mathrm{Ext}^1(\mc{E}_1,\mc{E}_0)$, and let
		$p:C\times S \rightarrow C$ denote the projection for a $k$-scheme $S$. The
		functor sending $S$ to the set of isomorphism classes of extensions of
		$p^*\mc{E}_1$ by $p^*\mc{E}_0$ on $C\times S$ is represented by the affine
		space $\bb{V}(V^{\vee})\simeq \bb{A}^N_k$, where $N=\dim_K V$. In
		particular, for any extension $0\rightarrow \mc{E}_0\rightarrow \mc{E}
		\rightarrow \mc{E}_1\rightarrow 0$, the bundle $\mc{E}$ is directly
		$\bb{A}^1$-concordant to $\mc{E}_0\oplus \mc{E}_1$ via the family
		parametrized by the line $t\mapsto t\cdot[\mc{E}]\in \bb{A}^N_K$.
	\end{proposition}
	
	\begin{definition}\label{def:family}
		Let $C$ be an irreducible smooth projective curve over an algebraically
		closed field $k$, and fix a line bundle $L$ on $C$. Let $S$ be a $k$-scheme.
		\begin{enumerate}[(1)]
			\item\label{c1} A \textit{family of vector bundles over $C$ of rank $n$ and
				determinant $L$ parametrized by $S$} is a vector bundle $\mc{E}$ over
			$C\times S$ together with an isomorphism $\det(\mc{E})\simeq p^*L$,
			where $C\times S\xrightarrow{p} C$ is the first projection.
			\item\label{c2} A family $\mc{E}$ as in~\eqref{c1} is a \textit{family of
				semistable vector bundles} if for each geometric point
			$\mathrm{Spec}\,K\rightarrow S$ with $K$ algebraically closed, the
			restriction $\mc{E}|_{C\times\mathrm{Spec}\,K}$ is semistable over
			$C_K := C\times \mathrm{Spec}\,K$.
		\end{enumerate}
	\end{definition}
	
	\begin{remark}
		For a $k$-scheme $S$, the $S$-valued points of $\mnl$ and $\mnlss$ are given
		by families of vector bundles and semistable vector bundles on $C$
		parametrized by $S$ as in Definition~\ref{def:family}, respectively.
	\end{remark}
	
	One of the main inputs for proving the $\bb{A}^1$-connectedness of $\mnlss$
	is the following result, which guarantees the existence of semistable
	quotients. We first record a slope estimate.
	
	\begin{lemma}\label{lem:mu-min}
		There exists a constant $a$, depending only on the curve $C$, such that for
		any two vector bundles $E$ and $F$ over $C$,
		\[
		\mu_{\mathrm{min}}(E\otimes F)\geq \mu_{\mathrm{min}}(E)
		+\mu_{\mathrm{min}}(F) - a\,.
		\]
	\end{lemma}
	
	\begin{proof}
		By \cite[Proposition~2.2]{Ch09}, $\mu_{\mathrm{max}}(E^{\vee}\otimes
		F^{\vee})\leq \mu_{\mathrm{max}}(E^{\vee})+\mu_{\mathrm{max}}(F^{\vee})+a$
		for some constant $a$ depending only on $C$. The claim follows from the
		equality $\mu_{\mathrm{max}}(G^{\vee}) = -\mu_{\mathrm{min}}(G)$ for any
		vector bundle $G$ on $C$.
	\end{proof}

	\begin{proposition}\label{prop:semistable-quotient}
		Let $C$ be a smooth projective curve of genus $g \geq 2$ over an infinite
		field $k$ with a rational point, and let $E$ be a locally free sheaf of rank
		$n \geq 2$ and degree $d$ on $C$. Then for any semistable locally free sheaf $F$ of rank
		$n-1$ with $\det F \cong \det E(nm)$ and $m \gg 0$, there exists a
		surjection $E(m) \twoheadrightarrow F$.
	\end{proposition}
	
	\begin{proof}
		Set $\mathcal{H} := \mathcal{H}om(E(m), F)$. By Lemma~\ref{lem:mu-min}
		there is a constant $a$ depending only on $C$ satisfying
		\begin{align*}
			\mu_{\mathrm{min}}(\mc{H})
			= \mu_{\min}(E^{\vee} \otimes F(-m))
			&\geq \mu_{\min}(E^{\vee}) + \mu_{\min}(F(-m)) - a \\
			&= \mu_{\mathrm{min}}(F(-m)) + c,
			\qquad c:=\mu_{\min}(E^{\vee})-a \\
			&= \mu(F) - m + c \\
			&= \frac{d+mn}{n-1} - m + c \\
			&= \frac{m}{n-1} + \Bigl(\frac{d}{n-1}+c\Bigr)\,,
		\end{align*}
		where $d = \deg(E)$. Hence $\mu_{\min}(\mathcal{H}) \to +\infty$ as
		$m \to \infty$. In particular, by Lemma~\ref{lem:mu-min},
		\[
		\mu_{\max}(\mathcal{H}^\vee(x) \otimes \omega_C)
		= -\mu_{\min}(\mc{H}\otimes \omega_C(x)) < 0
		\]
		for all $x \in C$ and $m\gg 0$. Since a nonzero global section of a bundle
		of negative maximal slope would yield a degree-zero subbundle, we get
		$H^0(C, \mathcal{H}^\vee(x) \otimes \omega_C) = 0$, hence
		$H^1(C, \mathcal{H}(-x)) = 0$ for all $x\in C$ by Serre duality. Thus
		$\mathcal{H}$ is globally generated.
		
		Let $V := H^0(C, \mathcal{H})$; by Riemann--Roch, $\dim V \to \infty$ as
		$m\to \infty$. Consider the incidence locus
		\[
		Z = \{(\phi, x) \in \mathbb{A}(V) \times C \mid \phi_x
		\text{ not surjective}\}.
		\]
		For each $x \in C$, the evaluation map
		$\mathrm{ev}_x : V \twoheadrightarrow \mathrm{Hom}(k^n, k^{n-1})$ is a
		surjective linear map, hence smooth. The locus $Y$ in
		$\mathrm{Hom}(k^n, k^{n-1})$ of non-surjective maps is the determinantal
		variety of maps of rank $\leq n-2$, which has codimension at least $2$. The
		fiber $Z_x$ of the projection $p: Z\longrightarrow C$ is
		$\mathrm{ev}_x^{-1}(Y)$; by smoothness of $\mathrm{ev}_x$, it also has
		codimension at least $2$ in $V$, so $\dim Z_x\leq \dim V - 2$. Since $p$ is
		surjective, by \cite[II~\S~3,~Ex.~3.22]{HartshorneAG}
		\[
		\dim Z \leq \dim Z_x + \dim C = \dim \mathbb{A}(V) - 1 < \dim\mathbb{A}(V).
		\]
		The image of $Z\longrightarrow\mathbb{A}(V)$ is therefore a proper closed
		subvariety. Since $k$ is infinite, there exists $\phi \in V(k)$ in the open
		complement, giving the required surjection $E(m) \twoheadrightarrow F$.
	\end{proof}
	
	\begin{remark}
		The existence of a semistable quotient of this type is likely known to
		experts; see the remark following \cite[Proposition~6.1]{PR03}. We include a
		proof here as our intended application requires the most general assumptions
		on the base field.
	\end{remark}
	
	\begin{proposition}\label{prop:direct-concordance}
		Let $F$ be a field extension of $k$. Let $\mc{E}_0$ and $\mc{E}_1$ be two
		vector bundles of rank $n$ and determinant $p^*L$ over the curve
		$C_F = C\times_k \mathrm{Spec}\,F$, where $p: C_F \rightarrow C$ is the
		projection and $L\in \Pic(C)$. Then $\mc{E}_0$ and $\mc{E}_1$ are directly
		$\bb{A}^1$-concordant.
	\end{proposition}
	
	\begin{proof}
		The curve $C_F$ is an irreducible smooth projective curve over $F$;
		smoothness and projectivity are stable under base change, while
		irreducibility follows from \cite[$\mathrm{IV}_2$,~Proposition~4.5.9]{EGA}.
		Choose any semistable vector bundle $\mc{F}$ of rank $n-1$ and determinant
		$p^*L(mn)$ on $C_F$. By applying Proposition~\ref{prop:semistable-quotient}
		to both $\mc{E}_0$ and $\mc{E}_1$, we may choose $m\gg 0$ so that there
		exist surjections $\mc{E}_i(m) \twoheadrightarrow \mc{F}$, $i=0,1$, with
		kernels $K_i$ satisfying $K_i\simeq \mc{O}_{C_F}$ (by comparison of
		determinants). Twisting by $\mc{O}(-m)$ gives extensions
		\begin{align}\label{eqn:extension}
			0\longrightarrow \mc{O}_{C_F}(-m) \longrightarrow \mc{E}_i
			\longrightarrow \mc{F}(-m) \longrightarrow 0\,,\quad i=0,1.
		\end{align}
		These give rise to two extension classes
		$[\mc{E}_i]\in V := \mathrm{Ext}^1\bigl(\mc{F}(-m),\,\mc{O}_{C_F}(-m)\bigr)$.
		The straight line $t\mapsto (1-t)[\mc{E}_0]+t[\mc{E}_1]$ in $V$ joins the
		two classes, and by Proposition~\ref{prop:concordance} this yields a direct
		$\bb{A}^1$-concordance between $\mc{E}_0$ and $\mc{E}_1$.
	\end{proof}
	
	\begin{remark}
		This strengthens \cite[Theorem~2.5(b)]{HY24}: for vector bundles $\mathcal{E}$
		and $\mathcal{F}$ of rank $n$ over $C$, they are directly $\mathbb{A}^1$-concordant
		if and only if $\det(\mathcal{E}) \simeq \det(\mathcal{F})$. One direction is
		Proposition~\ref{prop:direct-concordance}; for the converse, if $\mathcal{E}$
		and $\mathcal{F}$ are directly concordant, there is an induced morphism
		$\mathbb{A}^1 \to \mathrm{Pic}^d(C)$, which must be constant as $\mathrm{Pic}^d(C)$
		is an abelian variety. Thus $\det(\mathcal{E}) \simeq \det(\mathcal{F})$.
	\end{remark}
	
	\begin{proposition}\label{prop:direct-semistable-concordance}
		With the same setup as in Proposition~\ref{prop:direct-concordance}, assume
		moreover that $\mc{E}_0$ and $\mc{E}_1$ are semistable. Then there exists a
		direct semistable $\bb{A}^1$-concordance between $\mc{E}_0$ and $\mc{E}_1$;
		see Definition~\ref{def:ss-concordance}.
	\end{proposition}
	
	\begin{proof}
		Proposition~\ref{prop:direct-concordance} yields a vector bundle $\mc{E}$ on
		$C_F\times_F\bb{A}^1_F$ and distinct points $x_0, x_1 \in \bb{A}^1_F$
		satisfying Definition~\ref{def:concordance}. Consider the Cartesian diagram
		with $\pi$ and $\pi'$ the first projection maps:
		\begin{align}\label{diagram:diag-1}
			\begin{gathered}
				\xymatrix{
					C\times_{k}\bb{A}^1_F \ar[r]^(.45){\simeq} \ar[d]_{p'}
					& C_F\times_{F} \bb{A}^1_F \ar[rr]^(.6){\pi'} && C_F \ar[d]^{p} \\
					C\times_{k}\bb{A}^1_k \ar[rrr]^(.6){\pi} &&& C.
				}
			\end{gathered}
		\end{align}
		
		From the construction of $\mc{E}$ in Proposition~\ref{prop:direct-concordance},
		it follows that
		\begin{align}\label{eqn:iso-1}
			\det(\mc{E})\ \simeq \ (\pi')^*\det(\mc{E}_i)\ \simeq\ (\pi')^*(p^*L)
			\ =\ (p\circ\pi')^*L = (\pi\circ p')^*L,\quad i=0,1.
		\end{align}
		Via the identification $C\times_{k}\bb{A}^1_F\simeq C_F\times_{F} \bb{A}^1_F$,
		the map $\pi\circ p'$ corresponds to the first projection
		$p_1:C\times_{k}\bb{A}^1_F\longrightarrow C$. Regarding $\mc{E}$ as a
		vector bundle on $C\times_{k}\bb{A}^1_F$, equation~\eqref{eqn:iso-1} gives
		$\det(\mc{E})\simeq p_1^*L$. By Definition~\ref{def:family}, $\mc{E}$ is a
		family of vector bundles on $C$ of rank $n$ and determinant $L$ parametrized
		by $\bb{A}^1_F$, which by the $2$-Yoneda lemma gives a morphism
		\begin{align}\label{eqn:morphism-to-mnl}
			\varphi\ :\ \bb{A}^1_F\ \longrightarrow\ \mnl.
		\end{align}
		By \cite[Proposition~2.11]{ABBLT22}, the semistable locus
		$U := \{p\in \bb{A}^1_F \mid \mc{E}|_{C\times\{p\}} \text{ semistable}\}$
		is an open subscheme of $\bb{A}^1_F$, and $\varphi|_U$ factors through
		$\mnlss$:
		\begin{align}
			\begin{gathered}
				\xymatrix{
					\bb{A}^1_F \ar[rr]^(.45){\varphi} && \mnl \\
					U \ar@{^{(}->}[u] \ar[rr]^(.45){\varphi|_{U}} && \mnlss.
					\ar@{^{(}->}[u]
				}
			\end{gathered}
		\end{align}
		The stack $\mnlss$ is universally closed over $k$ by a result of Langton
		\cite[Proposition~6]{L75} (see also \cite[Theorem~3.7]{ABBLT22}):
		
		\smallskip
		\textit{Let $R$ be a discrete valuation ring with fraction field $K$. For
			any semistable vector bundle $E$ of fixed rank and determinant on $C_K$,
			there exists a vector bundle $\mc{E}$ on $C_R$ with $\mc{E}_K\simeq E$ and
			semistable special fiber.}
		\smallskip
		
		In the language of stacks, this translates via the $2$-Yoneda lemma to:
		\begin{align}
			\begin{gathered}
				\xymatrix{
					\mathrm{Spec}\,K \ar[d] \ar[rr] && \mnlss \ar[d]\\
					\mathrm{Spec}\,R \ar[rr] \ar@{-->}[urr]^{\exists} && \mathrm{Spec}\,k.
				}
			\end{gathered}
		\end{align}
		In other words, the valuative criterion for $\mnlss$ does not require further
		field extensions: it suffices to take $K'=K$ and $R'=R$ in
		diagram~\eqref{diagram:valuative-criterion} (see \cite[\S~3.1]{ABBLT22}).
		Hence $\varphi|_U : U \longrightarrow \mnlss$ extends to a morphism
		\[
		\widetilde{\varphi}\ :\ \bb{A}^1_F \ \longrightarrow\ \mnlss,
		\]
		which by the $2$-Yoneda lemma corresponds to a family $\widetilde{\mc{E}}$
		of semistable vector bundles over $C$ of rank $n$ and determinant $L$
		parametrized by $\bb{A}^1_F$. Since $\mc{E}_0$ and $\mc{E}_1$ are
		semistable, the points $x_0$ and $x_1$ lie in $U$ and are unchanged by
		$\widetilde{\varphi}$. Therefore
		\[
		\widetilde{\mc{E}}|_{C_F\times\{x_j\}}\ \simeq\ \mc{E}_j,\quad j=0,1,
		\]
		showing that $\mc{E}_0$ and $\mc{E}_1$ are directly semistably
		$\bb{A}^1$-concordant.
	\end{proof}
	
	\begin{theorem}\label{thm:a1-connectedness}
		The moduli stack $\mnlss$ of semistable vector bundles of rank $n$ and
		determinant $L$ over an irreducible smooth projective curve $C$ is
		$\bb{A}^1$--connected. In particular, if $n$ and $\deg(L)$ are coprime, the
		moduli stack of stable vector bundles $\mc{M}^s_C(n,L)$ is
		$\bb{A}^1$--connected.
	\end{theorem}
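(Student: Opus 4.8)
The plan is to deduce $\bb{A}^1$--connectedness of $\mnlss$ from Proposition \ref{prop:concordance-semistable-moduli} together with the characterization of $\bb{A}^1$--connectedness recalled in Remark \ref{remark:a1-connected-condition}. By that remark, it suffices to show that $S'(\mnlss)(\Spec F) = *$ for every finitely generated separable field extension $F$ of $k$; that is, any two $F$--points of $\mnlss$ are identified in $\mnlss(\Spec F)/\!\sim$, where $\sim$ is generated by naive $\bb{A}^1$--homotopy. An $F$--point of $\mnlss$ is precisely a semistable vector bundle $\mc{E}$ on $C_F$ of rank $n$ and determinant $p^*L$ (after noting $C_F$ is again an irreducible smooth projective curve over $F$, exactly as in the proof of the Proposition). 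Given two such bundles $\mc{E}_0,\mc{E}_1$, Proposition \ref{prop:concordance-semistable-moduli} provides a chain of direct semistable $\bb{A}^1$--concordances joining them, i.e.\ a finite sequence $\mc{E}_0 = \mc{F}_1,\dots,\mc{F}_r = \mc{E}_1$ with each consecutive pair fitting into a family of semistable bundles over $C_F\times\bb{A}^1_F$. Each such family is exactly a morphism $\bb{A}^1_F\to\mnlss$ restricting to $\mc{F}_i$ and $\mc{F}_{i+1}$ at the two chosen points, hence a naive $\bb{A}^1$--homotopy in $\mnlss(\Spec F)$ between $\mc{F}_i$ and $\mc{F}_{i+1}$; composing the chain shows $[\mc{E}_0] = [\mc{E}_1]$ in $S'(\mnlss)(\Spec F)$.

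One technical point to address carefully is the identification between naive $\bb{A}^1$--homotopies of $F$--points of the stack and families of semistable bundles over $\bb{A}^1_F$: the two distinct points $x_0,x_1$ in Definition \ref{def:ss-concordance} need not be the points $0,1$ used implicitly in Definition \ref{def:naive-a1-homotopy}, but since $\bb{A}^1_F$ admits an automorphism carrying any ordered pair of distinct $F$--rational points to any other (an affine change of coordinates), one may freely translate between the two; alternatively one invokes directly that the relation $\sim$ in Definition \ref{def:naive-a1-homotopy} is generated over \emph{all} choices of pairs of points, a standard fact. Thus a direct semistable $\bb{A}^1$--concordance in the sense of Definition \ref{def:ss-concordance} yields a relation under $\sim$ in $\mnlss(\Spec F)$. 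Since $F$ is in particular infinite (being a field extension of the algebraically closed, hence infinite, field $k$), Morel's criterion applies and gives $\pi_0^{\bb{A}^1}(\mnlss) \cong *$, i.e.\ $\mnlss$ is $\bb{A}^1$--connected.

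For the final assertion, recall that when $\gcd(n,\deg L) = 1$ every semistable vector bundle of rank $n$ and determinant $L$ is automatically stable, so $\mc{M}^s_C(n,L) = \mnlss$ as stacks, and $\bb{A}^1$--connectedness of the latter gives that of the former immediately.

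I do not anticipate a serious obstacle here: the real work was done in Proposition \ref{prop:concordance-semistable-moduli} (whose engine is Langton's existence statement, letting one extend a generically semistable family over $\bb{A}^1_F$ across the finitely many bad points without passing to a ramified cover), and the remaining step is the formal passage from semistable $\bb{A}^1$--concordance of $F$--points to vanishing of $S'$ on separable field extensions, combined with Morel's lemma as quoted in Remark \ref{remark:a1-connected-condition}. The only place demanding a line of care is confirming that the open locus $U\subseteq\bb{A}^1_F$ of semistable fibers is nonempty and contains $x_0,x_1$ (so that the extended morphism $\widetilde\varphi$ genuinely interpolates $\mc{E}_0$ and $\mc{E}_1$), but this is already handled inside the proof of the Proposition.
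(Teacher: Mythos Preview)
Your proof is correct and follows exactly the paper's approach: the paper's own proof consists of the single sentence ``This follows immediately from Remark~\ref{remark:a1-connected-condition} and Proposition~\ref{prop:concordance-semistable-moduli},'' and your argument is simply a careful unpacking of that sentence, together with the standard observation that $\gcd(n,\deg L)=1$ forces $\mc{M}^s_C(n,L)=\mnlss$. The additional technical remarks you make (about normalizing the pair of points in $\bb{A}^1_F$, and about $F$ being infinite) are harmless elaborations rather than a different route.
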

	
	\begin{proof}
		This follows immediately from Remark~\ref{remark:a1-connected-condition} and
		Proposition~\ref{prop:direct-semistable-concordance}.
	\end{proof}
	
	\section{$\bb{A}^1$--connectedness of the moduli stack of quasi-parabolic
		vector bundles over a curve}
	\label{section:quasi-parabolic}
	
	The notion of parabolic vector bundles over a curve was introduced by Mehta
	and Seshadri in \cite{MS80}. As before, $C$ denotes an irreducible smooth
	projective curve over an algebraically closed field $k$. Fix a finite subset
	$D\subset C$ of distinct closed points, called \textit{parabolic points},
	and the following data:
	\begin{enumerate}[$\bullet$]
		\item a positive integer $n$,
		\item for each $p\in D$, a positive integer $e(p)$ and a tuple of positive
		integers $\underline{m}(p):=(m_{p,1},\ldots,\allowbreak  m_{p,e(p)})$
		satisfying $\sum\nolimits_{i=1}^{e(p)}m_{p,i} = n$.
	\end{enumerate}
	Set $\boldsymbol{e} :=\{e(p)\mid p\in D\}$ and
	$\boldsymbol{m} := \{\underline{m}(p)\mid p\in D\}$.
	
	\begin{definition}\label{def:quasi-parabolic-data}
		The tuple $(\boldsymbol{e,m})$ is called a \textit{quasi-parabolic data of
			rank $n$ along $D$}. The collection $\boldsymbol{e}$ is a system of
		\textit{lengths} along $D$, and $\boldsymbol{m}$ is a system of
		\textit{multiplicities} along $D$.
	\end{definition}
	
	\begin{definition}\label{def:quasi-parabolic-bundle}
		Fix a quasi-parabolic data $(\boldsymbol{e,m})$. A \textit{quasi-parabolic
			vector bundle of rank $n$} on $C$ is a rank $n$ vector bundle $E$ on $C$
		together with, for each $p\in D$, a flag of type $\underline{m}(p)$ on the
		fiber $E_p$:
		\[
		E_p = E_{p,1}\supsetneq E_{p,2}\supsetneq \cdots
		\supsetneq E_{p,e(p)} \supsetneq E_{p,e(p)+1} = 0,
		\]
		satisfying $\dim(E_{p,i}/E_{p,i+1})=m_{p,i}$ for all $1\leq i\leq e(p)$.
	\end{definition}
	
	\subsection{Moduli stack of quasi-parabolic vector bundles}
	
	For a vector bundle $\mc{F}$ of rank $n$ on a $k$-scheme $S$ and a tuple
	$\underline{m}=(m_1,\ldots,m_e)$ with $\sum m_i=n$, let
	$\mathrm{Flag}_{\underline{m}}(\mc{F})\longrightarrow S$ denote the
	associated flag bundle of type $\underline{m}$. By its universal property, a
	section of this flag bundle corresponds to a chain of sub-bundles of $\mc{F}$
	of type $\underline{m}$.
	
	\begin{definition}[\text{\cite[Definition~3.1]{BY99}}]
		\label{def:families-of-quasi-parabolic-bundles}
		A \textit{family of quasi-parabolic vector bundles of type $(\boldsymbol{e,m})$
			on $C$ parametrized by $S$} is a vector bundle $\mc{F}$ on $C\times S$
		together with, for each $p\in D$, a section $s_p$ of
		$\mathrm{Flag}_{\underline{m}(p)}\bigl(\mc{F}|_{\{p\}\times S}\bigr)
		\longrightarrow S$.
	\end{definition}
	
	The moduli stack of quasi-parabolic vector bundles on $C$ of rank $n$,
	determinant $L\in\Pic(C)$ and quasi-parabolic data $(\boldsymbol{e,m})$ is
	denoted $\mc{PM}_C^{\boldsymbol{e,m}}(n,L)$. Its objects over $S$ are
	families as in Definition~\ref{def:families-of-quasi-parabolic-bundles} with
	$\det(\mc{F})\simeq p^*L$.
	
	\begin{definition}\label{def:flag-bundle-stack}
		Let $\mf{X}$ be an algebraic stack over $k$, and let $\mc{E}$ be a vector
		bundle of rank $n$ over $\mf{X}$. Fix a tuple
		$\underline{m}=(m_1,\ldots,m_e)$ with $\sum m_i=n$. The \textit{flag bundle
			stack of type $\underline{m}$ associated to $\mc{E}$}, denoted
		$\mathrm{Flag}_{\underline{m}}(\mc{E})$, is the algebraic stack whose
		$T$-points are pairs $(u, F)$ with $u: T\to \mf{X}$ a morphism and $F$ a
		flag of type $\underline{m}$ on $u^*\mc{E}$.
	\end{definition}
	
	\begin{remark}
		If $\mc{E}$ trivializes over a Zariski cover $V\to\mf{X}$, then
		$\mathrm{Flag}_{\underline{m}}(\mc{E})\times_{\mf{X}} V \simeq
		\mathrm{Flag}_{\underline{m}}(k^n)\times V$. Thus
		$\mathrm{Flag}_{\underline{m}}(\mc{E})$ is Zariski-locally trivial with
		fiber the flag variety $\mathrm{Flag}_{\underline{m}}(k^n)$.
	\end{remark}
	
	\begin{proposition}\label{prop:flag-bundle-a1-connected}
		Let $\mf{X}$ be an algebraic stack over an algebraically closed field $k$,
		let $F$ be a finitely generated field extension of $k$, and let
		$f: \mf{Y}\longrightarrow \mf{X}$ be a Zariski-locally trivial flag bundle
		stack with fibers isomorphic to a flag variety $Z$. If
		$S'(\mf{X})(\mathrm{Spec}\,F)=0$, then $S'(\mf{Y})(\mathrm{Spec}\,F)=0$.
	\end{proposition}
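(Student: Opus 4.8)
The plan is to work directly with $F$--points, unwinding the definition of $S'$: since $S'(\mf{Y})(\Spec F)$ is the quotient of the set $\mf{Y}(F)$ of $F$--points by the equivalence relation generated by naive $\bb{A}^1$--homotopy, it suffices to prove that any two $F$--points of $\mf{Y}$ lie in a single such class. Recall that an $F$--point of $\mf{Y}$ is a pair $(u,\Phi)$ with $u:\Spec F\to\mf{X}$ and $\Phi$ a flag of type $\underline{m}$ on the rank $n$ vector space $u^*\mc{E}$, and that $f(u,\Phi)=u$. Two facts will be used repeatedly. First, every vector bundle on $\bb{A}^1_F=\Spec F[t]$ is trivial, since $F[t]$ is a principal ideal domain. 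Second, the flag variety $Z_F:=Z\times_k\Spec F$ has the property that $Z(F)$ forms a single naive $\bb{A}^1$--homotopy class: by the Bruhat decomposition $Z_F$ is covered by $\mr{GL}_n(F)$--translates of a big cell $\Omega\cong\bb{A}^N_F$, and $\mr{GL}_n(F)$ acts transitively on $Z_F(F)$, so every $F$--point lies in such a translate; any two of these dense open affine spaces meet in a nonempty open subset, which contains an $F$--point since $F$ is infinite (being finitely generated over the algebraically closed field $k$) and $Z_F$ contains a dense affine space. Hence any two $F$--points of $Z_F$ are joined by a chain of two lines, each contained in a copy of affine space.

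I would then carry out two elementary types of move. For the \emph{vertical} move, fix an $F$--point $u$ of $\mf{X}$; the fibre $f^{-1}(u)$ is the flag variety $\mr{Flag}_{\underline{m}}(u^*\mc{E})$, isomorphic to $Z_F$ after trivializing $u^*\mc{E}$. A morphism $\bb{A}^1_F\to f^{-1}(u)$ composed with the projection from $f^{-1}(u)=\mf{Y}\times_{\mf{X}}\Spec F$ down to $\mf{Y}$ is precisely a naive $\bb{A}^1$--homotopy in $\mf{Y}(F)$ whose composite with $f$ is constant at $u$, so by the second fact any two $F$--points of $\mf{Y}$ over the same $u$ are equivalent. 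For the \emph{horizontal} move, suppose $u_0,u_1$ are directly $\bb{A}^1$--homotopic in $\mf{X}(F)$ via $h:\bb{A}^1_F\to\mf{X}$ with $h|_{x_0}=u_0$ and $h|_{x_1}=u_1$. By the first fact $h^*\mc{E}$ is trivial on $\bb{A}^1_F$, so $\mr{Flag}_{\underline{m}}(h^*\mc{E})\cong Z_F\times_F\bb{A}^1_F$, and the constant section of this product is a flag on $h^*\mc{E}$, i.e. a lift $\widetilde{h}:\bb{A}^1_F\to\mf{Y}$ of $h$; its restrictions at $x_0$ and $x_1$ are $F$--points $(u_0,\Phi_0')$ and $(u_1,\Phi_1')$ of $\mf{Y}$ that are directly $\bb{A}^1$--homotopic.

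To finish, note that $f$ induces a surjection $\mf{Y}(F)\twoheadrightarrow\mf{X}(F)$ since a flag variety over a field always has a rational point. Given $F$--points $(u_0,\Phi_0)$ and $(u_1,\Phi_1)$ of $\mf{Y}$, the hypothesis $S'(\mf{X})(\Spec F)=0$ yields a chain $u_0=v_0,\dots,v_r=u_1$ in $\mf{X}(F)$ with consecutive terms directly $\bb{A}^1$--homotopic; applying a horizontal move along each link and inserting vertical moves at each $v_j$ to match up the flags produces a chain in $\mf{Y}(F)$ from $(u_0,\Phi_0)$ to $(u_1,\Phi_1)$, so $S'(\mf{Y})(\Spec F)=0$. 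The crux of the argument is the second fact above: a horizontal move never lifts $h$ to a homotopy with the \emph{prescribed} endpoint flags, only to one whose endpoints are the flags determined by some arbitrary trivialization of $h^*\mc{E}$, so the entire discrepancy must be absorbed by moving within the fibre flag varieties over the (now constant) base points. Thus everything reduces to the $\bb{A}^1$--chain connectedness of $Z_F$ on $F$--points, which rests on its cellular (Bruhat) structure together with the infiniteness of $F$.
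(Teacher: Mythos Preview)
Your proof is correct and follows essentially the same strategy as the paper's: both reduce the problem to the $\bb{A}^1$--chain connectedness of the pullback $g^*\mf{Y}$ over $\bb{A}^1_F$, identified with $\bb{A}^1_F\times_F Z_F$, and use that flag varieties are $\bb{A}^1$--chain connected on $F$--points. The only organizational difference is that you separate the argument into explicit ``vertical'' moves (within a fibre $Z_F$) and ``horizontal'' moves (lifting an $\bb{A}^1$--homotopy in $\mf{X}$ via a constant section of the trivialized product), whereas the paper collapses both into a single step by directly joining the images of $x$ and $y$ inside the $\bb{A}^1$--chain connected product $\bb{A}^1_F\times Z$. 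Your version is somewhat more explicit---you justify the triviality of $h^*\mc{E}$ on $\bb{A}^1_F$ via the PID structure of $F[t]$ and spell out the $\bb{A}^1$--chain connectedness of $Z_F(F)$ through the Bruhat decomposition---where the paper simply asserts that the pullback ``must be the trivial flag bundle'' and that flag varieties are ``rational and hence $\bb{A}^1$--chain connected''.
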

	
	\begin{proof}
		Given two $F$-points $x, y \in \mf{Y}$, we show they can be joined by a
		finite sequence of maps $\bb{A}^1_F\longrightarrow\mf{Y}$.
		
		If $f(x)=f(y)$ in $\mf{X}$, then $x$ and $y$ lie in the same fiber of $f$,
		which is a flag variety. Since flag varieties are rational and hence
		$\bb{A}^1$-chain connected, $x$ and $y$ can be joined.
		
		If $f(x)\neq f(y)$, by hypothesis there is a finite sequence of maps
		$\bb{A}^1_F\to\mf{X}$ joining $f(x)$ to $f(y)$; without loss of generality
		assume $f(x)$ and $f(y)$ are directly joined by a morphism
		$g: \bb{A}^1_F\longrightarrow \mf{X}$ with $g(0)=f(x)$ and $g(1)=f(y)$.
		Since $f$ is a representable Zariski-locally trivial flag bundle stack, the
		pullback $g^*\mf{Y}\longrightarrow \bb{A}^1_F$ is the trivial flag bundle
		$\bb{A}^1_F\times_k Z$. Let $x'$ and $y'$ denote the points of
		$\bb{A}^1_F\times_k Z$ corresponding to $(0,x)$ and $(1,y)$ respectively.
		Since $\bb{A}^1_F$ and $Z$ are both $\bb{A}^1$-chain connected, so is their
		product, and $x'$ and $y'$ can be joined by a sequence of maps
		$\bb{A}^1_F\to \bb{A}^1_F\times_k Z \simeq g^*\mf{Y}$. Composing with
		$g^*\mf{Y}\to\mf{Y}$ completes the proof.
	\end{proof}
	\medskip
	
	\begin{theorem}\label{thm:quasi-parabolic-moduli-a1-connected}
		The moduli stack of quasi-parabolic vector bundles $\mc{PM}_C^{\boldsymbol{e,m}}(n,L)$
		is $\bb{A}^1$--connected.
	\end{theorem}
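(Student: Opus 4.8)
The plan is to realize $\mc{PM}_C^{\boldsymbol{e,m}}(n,L)$ as an iterated flag bundle stack over $\mnl$ and then combine the known $\bb{A}^1$--connectedness of $\mnl$ (from \cite{HY24}, together with Remark \ref{remark:a1-connected-condition}) with Proposition \ref{prop:flag-bundle-a1-connected}. First I would set up the structure morphism $\Phi: \mc{PM}_C^{\boldsymbol{e,m}}(n,L)\longrightarrow\mnl$ that forgets all the flag data at the parabolic points; by Definition \ref{def:families-of-quasi-parabolic-bundles} this sends a family $(\mc{F},\{s_p\}_{p\in D})$ to the underlying family $\mc{F}$. The key observation is that $\mnl$ carries, for each $p\in D$, a \emph{restriction-to-$p$} vector bundle: if $\mc{E}^{\mr{univ}}$ denotes the universal bundle on $C\times\mnl$, then $\mc{E}^{\mr{univ}}_p := \mc{E}^{\mr{univ}}|_{\{p\}\times\mnl}$ is a rank $n$ vector bundle over the stack $\mnl$ in the sense of Definition \ref{def:flag-bundle-stack}. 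With this in hand, $\mc{PM}_C^{\boldsymbol{e,m}}(n,L)$ is precisely the fiber product over $\mnl$ of the flag bundle stacks $\textnormal{Flag}_{\underline{m}(p)}(\mc{E}^{\mr{univ}}_p)$ as $p$ ranges over $D$, matching the $T$-points in Definition \ref{def:flag-bundle-stack} with the sections $s_p$ of $\textnormal{Flag}_{\underline{m}(p)}(\mc{F}|_{\{p\}\times S})\to S$ via the universal property of flag bundles.

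Next I would argue inductively over the finitely many points of $D$. Enumerate $D = \{p_1,\dots,p_d\}$. Set $\mf{X}_0 := \mnl$, and for $1\le j\le d$ let $\mf{X}_j := \textnormal{Flag}_{\underline{m}(p_j)}\bigl(\mc{E}^{\mr{univ}}_{p_j}|_{\mf{X}_{j-1}}\bigr)$, the flag bundle stack associated to the pullback of the restriction bundle along $\mf{X}_{j-1}\to\mnl$. Then $\mf{X}_d \simeq \mc{PM}_C^{\boldsymbol{e,m}}(n,L)$, and each morphism $\mf{X}_j\to\mf{X}_{j-1}$ is a Zariski-locally trivial flag bundle stack with fiber the flag variety $\textnormal{Flag}_{\underline{m}(p_j)}(k^n)$ — here I would invoke the Remark following Definition \ref{def:flag-bundle-stack}, which says exactly that flag bundle stacks are Zariski-locally trivial. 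Now $\mnl$ is $\bb{A}^1$--connected by \cite[Theorem 1.1]{HY24}, and by Remark \ref{remark:a1-connected-condition} this is detected by $S'(\mnl)(\Spec F) = 0$ for every finitely generated separable field extension $F/k$. Applying Proposition \ref{prop:flag-bundle-a1-connected} repeatedly — once for each $j$, with $\mf{X} = \mf{X}_{j-1}$ and $\mf{Y} = \mf{X}_j$ — gives $S'(\mf{X}_j)(\Spec F) = 0$ for all such $F$, and at $j=d$ this yields $S'\bigl(\mc{PM}_C^{\boldsymbol{e,m}}(n,L)\bigr)(\Spec F) = 0$ for all finitely generated separable $F/k$. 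By Remark \ref{remark:a1-connected-condition} (the chain $S'(X)(\Spec F) = * \Rightarrow S(X)(\Spec F) = * \Rightarrow X$ is $\bb{A}^1$--connected, via Morel's Lemma 6.1.3 in \cite{M05}), this proves the theorem.

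The main obstacle — really the only non-formal point — is verifying that $\mc{PM}_C^{\boldsymbol{e,m}}(n,L)$ genuinely is the iterated flag bundle stack described above, i.e.\ that the moduli description in terms of sections $s_p$ of flag bundles over $S$ (Definition \ref{def:families-of-quasi-parabolic-bundles}) is equivalent to the $T$-point description of $\textnormal{Flag}_{\underline{m}(p)}(\mc{E}^{\mr{univ}}_p)$ (Definition \ref{def:flag-bundle-stack}); this amounts to the compatibility of the universal bundle's formation with restriction to $\{p\}\times S$ and with base change, which is standard but should be spelled out. A secondary point to be careful about is that the base change $C_F$ and the corresponding universal objects behave well under the field extension $F/k$, and that "flag variety" in Proposition \ref{prop:flag-bundle-a1-connected} is being applied to the partial flag variety of type $\underline{m}(p_j)$, which is indeed smooth, projective, rational, and hence $\bb{A}^1$--chain connected — so the hypotheses of that proposition are met at each stage. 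Everything else is a direct bookkeeping of Proposition \ref{prop:flag-bundle-a1-connected} applied finitely many times.
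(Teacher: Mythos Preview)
Your proposal is correct and follows essentially the same approach as the paper: both identify the forgetful map $\mc{PM}_C^{\boldsymbol{e,m}}(n,L)\to\mnl$ with an iterated flag bundle stack built from the restrictions $\iota_p^*\mc{E}^{\mathrm{univ}}$ of the universal bundle at the parabolic points, then apply Proposition~\ref{prop:flag-bundle-a1-connected} (repeatedly) to the input $S'(\mnl)(\Spec F)=0$ from \cite{HY24} and conclude via Remark~\ref{remark:a1-connected-condition}. Your explicit inductive tower $\mf{X}_0,\dots,\mf{X}_d$ is a slightly more detailed packaging of what the paper summarizes as ``iterated flag bundle stack,'' but the argument is the same.
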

	
	\begin{proof}
		The forgetful map
		\begin{align}\label{eqn:forgetful-map}
			\mc{PM}_C^{\boldsymbol{e,m}}(n,L) \ \longrightarrow\ \mnl
		\end{align}
		is an iterated Zariski-locally trivial flag bundle stack. For $D=\{p\}$ a
		single point, letting $\mc{E}$ be the universal bundle over $\mnl\times C$
		and $\iota_p:\mnl\hookrightarrow\mnl\times C$ the inclusion at $p$, the
		$S$-points of $\mc{PM}_C^{\boldsymbol{e,m}}(n,L)$ and
		$\mathrm{Flag}_{\underline{m}}(\iota_p^*\mc{E})$ coincide for any $k$-scheme
		$S$, giving an isomorphism
		\[
		\mc{PM}_C^{\boldsymbol{e,m}}(n,L)\ \overset{\simeq}{\longrightarrow}\
		\mathrm{Flag}_{\underline{m}}(\iota_p^*\mc{E})\,.
		\]
		For $D=\{p_1,\ldots,p_r\}$, the stack $\mc{PM}_C^{\boldsymbol{e,m}}(n,L)$
		is the fiber product over $\mnl$ of the stacks for each $p_i$, confirming
		that \eqref{eqn:forgetful-map} is an iterated flag bundle stack.
		
		By \cite[Theorem~1.1]{HY24}, $S'(\mnl)(\mathrm{Spec}\,F)=0$ for every
		finitely generated field extension $F/k$. Applying
		Proposition~\ref{prop:flag-bundle-a1-connected} iteratively gives
		$S'(\mc{PM}_C^{\boldsymbol{e,m}}(n,L))(\mathrm{Spec}\,F)=0$, and the
		$\bb{A}^1$-connectedness follows from Remark~\ref{remark:a1-connected-condition}.
	\end{proof}
	
	\section{$\bb{A}^1$--connectedness of the moduli stack of parabolic
		semistable bundles over a curve}
	\label{section:parabolic}
	
	We now consider the open substacks of $\mc{PM}_C^{\boldsymbol{e,m}}(n,L)$
	consisting of $\boldsymbol{\alpha}$-(semi)stable parabolic bundles. Fix a
	quasi-parabolic data $(\boldsymbol{e,m})$ of rank $n$ along $D$, and a
	system of weights
	\begin{align}\label{eqn:weights}
		\boldsymbol{\alpha} :=\{(\alpha_{p,1}<\alpha_{p,2}<\cdots
		<\alpha_{p,e(p)})\mid p\in D\},
	\end{align}
	where $\alpha_{p,i}\in [0,1)$ for all $p$ and $i$.
	
	\begin{definition}\label{def:parabolic-bundles}
		Let $E$ be a quasi-parabolic vector bundle of rank $n$ on $C$ associated to
		$(\boldsymbol{e,m})$ (see Definition~\ref{def:quasi-parabolic-bundle}).
		\begin{enumerate}[(1)]
			\item The $\boldsymbol{\alpha}$-\textit{degree} of $E$ is
			\[
			\deg_{\boldsymbol{\alpha}}(E) := \deg(E)
			+\sum_{p\in D}\sum_{i=1}^{e(p)}m_{p,i}\,\alpha_{p,i}.
			\]
			\item The $\boldsymbol{\alpha}$-\textit{slope} of $E$ is
			$\mu_{\boldsymbol{\alpha}}(E):=\deg_{\boldsymbol{\alpha}}(E)/n$.
		\end{enumerate}
	\end{definition}
	
	The notion of $\boldsymbol{\alpha}$-semistability and
	$\boldsymbol{\alpha}$-stability for a quasi-parabolic vector bundle is
	well-known; see \cite[Definition~1.13]{MS80}. The quasi-parabolic moduli
	stack $\mc{PM}^{\boldsymbol{e,m}}(n,L)$ contains an open substack of
	$\boldsymbol{\alpha}$-semistable bundles, denoted
	$\mc{PM}^{\boldsymbol{\alpha}\text{-}ss,\boldsymbol{e,m}}_C(n,L)$.
	
	\begin{definition}
		A system of weights $\boldsymbol{\alpha}$ is said to be \textit{generic} if a quasi-parabolic vector bundle is $\boldsymbol{\alpha}$-semistable if and only if it is $\boldsymbol{\alpha}$-stable. We shall call a system of weights $\boldsymbol{\alpha}$ to be \textit{small} if it satisfies the numerical condition of \cite[Proposition 5.3]{BY99}. We refer to \cite[\S~2 and \S~5]{BY99} for more details.
	\end{definition}
	 The notion of small and generic weights will be useful for our next result.
	\begin{theorem}\label{thm:parabolic-semistable-moduli-a1-connected}
		For $\boldsymbol{\alpha}$ small and generic and $\gcd(n,\deg L) = 1$, the
		moduli stack $\mc{PM}^{\boldsymbol{\alpha}\text{-}ss,\boldsymbol{e,m}}_C(n,L)$
		is $\bb{A}^1$--connected.
	\end{theorem}
	
	\begin{proof}
		Under these assumptions, $\boldsymbol{\alpha}$-semistability coincides with
		$\boldsymbol{\alpha}$-stability and every semistable bundle of rank $n$ and
		degree $\deg L$ is stable, so
		$\mc{PM}^{\boldsymbol{\alpha}\text{-}ss,\boldsymbol{e,m}}_C(n,L) =
		\mc{PM}^{\boldsymbol{\alpha}\text{-}s,\boldsymbol{e,m}}_C(n,L)$ and
		$\mnlss = \mnls$. By \cite[Proposition 5.3]{BY99}, the forgetful morphism
		\[
		\mc{PM}^{\boldsymbol{\alpha}\text{-}s,\boldsymbol{e,m}}_C(n,L)
		\longrightarrow \mnls
		\]
		is an iterated flag bundle stack. Applying
		Proposition~\ref{prop:flag-bundle-a1-connected} together with
		Theorem~\ref{thm:a1-connectedness} gives
		$S'(\mc{PM}^{\boldsymbol{\alpha}\text{-}s,\boldsymbol{e,m}}_C(n,L))(F) = *$,
		establishing $\bb{A}^1$-connectedness using Remark \ref{remark:a1-connected-condition}.
	\end{proof}
	
	\begin{remark}
		In the above proof we do not need to invoke Langton to repair paths: by
		By \cite[Proposition 5.3]{BY99}, the paths constructed in
		Proposition~\ref{prop:flag-bundle-a1-connected} consist entirely of
		$\boldsymbol{\alpha}$-semistable points.
	\end{remark}
	
	\begin{remark}
		The assumptions of Theorem~\ref{thm:parabolic-semistable-moduli-a1-connected}
		may appear restrictive. For general $\boldsymbol{\alpha}$, the methods of
		this paper do not yield a direct $\bb{A}^1$-concordance between two
		$K$-points of
		$\mc{PM}^{\boldsymbol{\alpha}\text{-}ss,\boldsymbol{e,m}}_C(n,L)$: the
		concordance in Proposition~\ref{prop:flag-bundle-a1-connected} consists of
		multiple $\bb{A}^1$-paths, and it is not clear that each path has a nonempty
		$\boldsymbol{\alpha}$-semistable locus, which would be needed to apply
		Langton.
	\end{remark}
	
	\section*{Acknowledgments}
	
	The first author is supported by the DST INSPIRE Faculty Fellowship (Grant
	No.:~DST/\allowbreak INSPIRE/\allowbreak 04/\allowbreak 2024/\allowbreak 001521),
	Ministry of Science and Technology, Government of India. The second author
	is partially supported by the same grant and would like to thank ISI Kolkata
	for support as Visiting Scientist. The authors thank Rakesh Pawar for
	pointing out an error in a previous version, which led to the substantially
	improved argument presented here.
	

\end{document}